\newtheorem{thm}{Theorem}[section]
\newtheorem{cor}[thm]{Corollary}
\newtheorem{defi}[thm]{Definition}
\newtheorem{lem}[thm]{Lemma}
\newtheorem{claim}[thm]{Claim}
\theoremstyle{remark}
\def\C{\mathcal {C}}
\def\FF{\mathcal {F}}
\def\red1{\color{red}1\color{black}}
\def\blue1{\color{blue}1\color{black}}
\def\mabab{\begin{bmatrix}
0 & 1 & 0\\
1 & 0 & 1
\end{bmatrix}}
\def\blfootnote{\gdef\@thefnmark{}\@footnotetext}
\begin{document}

\title{On dual-ABAB-free and related hypergraphs}
\author{Bal\'azs Keszegh\thanks{HUN-REN Alfréd Rényi Institute of Mathematics and ELTE Eötvös Loránd University, Budapest, Hungary. 
		Research supported by the J\'anos Bolyai Research Scholarship of the Hungarian Academy of Sciences, by the National Research, Development and Innovation Office -- NKFIH under the grant K 132696 and FK 132060, by the \'UNKP-23-5 New National Excellence Program of the Ministry for Innovation and Technology from the source of the National Research, Development and Innovation Fund and by the ERC Advanced Grant ``ERMiD''. This research has been implemented with the support provided by the Ministry of Innovation and Technology of Hungary from the National Research, Development and Innovation Fund, financed under the  ELTE TKP 2021-NKTA-62 funding scheme.}\\
\and D\"om\"ot\"or P\'alv\"olgyi$^*$
}

\maketitle


\begin{abstract}
	Geometric motivations warranted the study of hypergraphs on ordered vertices that have no pair of hyperedges that induce an alternation of some given length. Such hypergraphs are called ABA-free, ABAB-free and so on. Since then various coloring and other combinatorial results were proved about these families of hypergraphs. We prove a characterization in terms of their incidence matrices which avoids using the ordering of the vertices. Using this characterization, we prove new results about the dual hypergraphs of ABAB-free hypergraphs. In particular, we show that dual-ABAB-free hypergraphs are not always proper $2$-colorable even if we restrict ourselves to hyperedges that are larger than some parameter $m$.
\end{abstract}


\section{Introduction}

For an integer $t$, we denote by $(AB)^{t/2}$ the alternating sequence of letters $A$ and $B$ of length $t$. For example, $(AB)^{1.5}=ABA$ and $(AB)^2=ABAB$.
Next we define what we mean by a $(AB)^{t/2}$-free hypergraph.

\begin{defi}
	\mbox{}
	\begin{enumerate}
		\item Two subsets $A,B$ of an ordered set of elements form an \emph{$(AB)^{t/2}$-sequence} if there are $t$ elements $a_1 < b_1 < a_2 < b_2 < \ldots$ such that $\{a_1,a_2,\ldots\} \subseteq A \setminus B$ and  $\{b_1,b_2,\ldots\} \subseteq B \setminus A$.
		\item A hypergraph with an ordered vertex set is {\em $(AB)^{t/2}$-free} if it does not contain two hyperedges $A$ and $B$ that form an $(AB)^{t/2}$-sequence.
		\item A hypergraph with an unordered vertex set is $(AB)^{t/2}$-free if there is an ordering of its vertices such that the hypergraph with this ordered vertex set is $(AB)^{t/2}$-free.
		\item A hypergraph on an unordered vertex set is dual-$(AB)^{t/2}$-free if its dual hypergraph\footnote{The dual of a hypergraph is the hypergraph that we obtain by reversing the roles of vertices and hyperedges.} is $(AB)^{t/2}$-free.
	\end{enumerate}
\end{defi}

\subsection{Geometric motivation}
ABA-free hypergraphs and its generalizations were first studied in \cite{abafree} motivated by the fact that such hypergraphs can be realized by natural planar regions. 
We say that a collection of points $P$ and regions $R$ realize $H=(V,E)$ if there are bijections $V\to P$ and $E\to R$ that preserve incidence.
We say that two curves cross $t$ times if they have finitely many intersection points and exactly $t$ intersection points cannot be eliminated by local perturbation, i.e., they are not just touchings.
A curve is $x$-monotone if it is the part of a graph of a function.
An upward region is a part of the plane that lies above the graph of a function.
Two upward regions are $t$-intersecting if their boundaries intersect at most $t$ times.
In \cite{abab}, generalizing the $t=1$ result of \cite{abafree}, it was shown that $(AB)^{(t+2)/2}$-free hypergraphs are equivalent to hypergraphs realizable by such regions.

\begin{lem}[\cite{abab}]\label{lem:parabolas2ABAB}
	Let $t \ge 1$ be an integer and let $\C$ be a family of $t$-intersecting upwards regions in the plane and let $S$ be a finite set of points. Then $H(S,\C)$ is an $(AB)^{(t+2)/2}$-free hypergraph.
\end{lem}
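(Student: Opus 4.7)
The plan is to order the points of $S$ by $x$-coordinate and show, by contradiction, that the existence of an $(AB)^{(t+2)/2}$-sequence in $H(S,\C)$ forces the boundaries of the two relevant regions to have more than $t$ unavoidable crossings.

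First, fix the ordering of $S$ by $x$-coordinate (breaking ties arbitrarily after a slight perturbation, or assuming general position so that no two points of $S$ share an $x$-coordinate). Suppose for contradiction that two regions $A,B\in\C$ form an $(AB)^{(t+2)/2}$-sequence, witnessed by $t+2$ points $p_1,\ldots,p_{t+2}\in S$, ordered by $x$-coordinate, with $p_i\in A\setminus B$ for $i$ odd and $p_i\in B\setminus A$ for $i$ even (or vice versa). Write $p_i=(x_i,y_i)$ and let $f_A,f_B$ be the functions whose graphs bound $A$ and $B$; so $p\in A$ iff $y>f_A(x)$, and similarly for $B$.

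The key observation is that membership at $p_i$ translates into an inequality between $f_A(x_i)$ and $f_B(x_i)$. If $p_i\in A\setminus B$ then $y_i>f_A(x_i)$ and $y_i\le f_B(x_i)$, so $f_A(x_i)<f_B(x_i)$; if $p_i\in B\setminus A$ then symmetrically $f_B(x_i)<f_A(x_i)$. Therefore the function $g(x):=f_A(x)-f_B(x)$ takes strictly alternating signs at $x_1,\ldots,x_{t+2}$, forcing at least $t+1$ sign changes. Each sign change produces a point in the interior of some interval $(x_i,x_{i+1})$ where the graphs of $f_A$ and $f_B$ transversally swap order; such an intersection is a genuine crossing and cannot be removed by a local perturbation of either boundary. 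Hence the boundaries of $A$ and $B$ admit at least $t+1$ essential intersections, contradicting that $A$ and $B$ are $t$-intersecting.

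The main obstacle is the last step, namely making rigorous the claim that each sign change of $g$ corresponds to a crossing that survives perturbation rather than a mere touching. This is essentially intermediate-value plus the definition: between $x_i$ and $x_{i+1}$ the function $g$ goes from positive to negative (or vice versa), so along any continuous path of small perturbations of $f_A,f_B$ the two graphs must still cross in this interval. One small bookkeeping point to handle is the possibility that $p_i$ lies exactly on a boundary, which is avoided either by the convention that regions are open (so $p_i\notin A$ means $y_i\le f_A(x_i)$, which is all we used) or by perturbing $S$ infinitesimally; either way the argument is unaffected.
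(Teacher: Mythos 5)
Your proof is correct. Note that the paper only cites this lemma from \cite{abab} and does not reprove it; your argument --- order $S$ by $x$-coordinate, translate alternating membership into strict sign alternation of $f_A-f_B$ at $t+2$ abscissae, and extract $t+1$ unavoidable boundary crossings via the intermediate value theorem --- is exactly the standard proof of this equivalence, and your handling of ties and boundary points is adequate (indeed, two points with equal $x$-coordinate cannot lie in $A\setminus B$ and $B\setminus A$ respectively, so ties cause no trouble even without perturbation).
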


\begin{lem}[\cite{abab}]\label{lem:ABAB2parabolas}
	Let $t \ge 1$ be an integer and let $H$ be a finite $(AB)^{(t+2)/2}$-free hypergraph. Then there is a family $\C$ of $t$-intersecting upwards regions in the plane and a finite point set $S$ such that $H(S,\C)$ is isomorphic to $H$.
\end{lem}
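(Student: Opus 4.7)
The plan is an explicit realization: use the vertex ordering witnessing the $(AB)^{(t+2)/2}$-freeness to place the vertices on a horizontal line, and for each hyperedge construct an upward region whose boundary is a piecewise-linear graph dipping below the line exactly at the vertices of the hyperedge. The forbidden alternation pattern will then translate directly into a bound on the number of times any two such graphs cross.

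\emph{Construction and incidence.} Fix an ordering $v_1<v_2<\cdots<v_n$ witnessing $(AB)^{(t+2)/2}$-freeness and place $v_i$ at $p_i:=(i,0)$. For each hyperedge $E$ let $f_E:\mathbb{R}\to\mathbb{R}$ be the continuous function with $f_E(i)=-1$ if $v_i\in E$ and $f_E(i)=+1$ if $v_i\notin E$ at integers $i\in\{1,\ldots,n\}$, linearly interpolated between consecutive integers and extended outside $[0,n+1]$ by a common envelope so the graphs do not cross there. Then $R_E:=\{(x,y):y>f_E(x)\}$ is an upward region, $p_i\in R_E$ iff $f_E(i)<0$ iff $v_i\in E$, and hence $\{p_i\}$ together with $\{R_E\}$ realizes $H$.

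\emph{Intersection count.} For two hyperedges $A,B$ put $g:=f_A-f_B$; at every integer $i$, $g(i)\in\{-2,0,+2\}$ is nonzero iff $v_i\in A\triangle B$, with $\operatorname{sign}(g(i))$ distinguishing $v_i\in A\setminus B$ (sign $-$) from $v_i\in B\setminus A$ (sign $+$). The sign pattern of the nonzero values $g(i_1),\ldots,g(i_m)$ is thus literally the $A$/$B$-labeling of the ordered vertices of $A\triangle B$; by the $(AB)^{(t+2)/2}$-free hypothesis this labeling has no alternating subsequence of length $t+2$, so its longest alternating subsequence has length at most $t+1$, meaning the sign pattern admits at most $t$ sign changes. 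A brief case analysis of the piecewise-linear $g$ shows that each sign change yields exactly one transversal zero of $g$ on $[1,n]$ (located at a non-integer point, an integer with $g(i)=0$, or within a zero-subinterval), while same-sign runs yield no transversal intersections. Hence $\partial R_A$ and $\partial R_B$ cross transversally at most $t$ times, i.e., $\{R_E\}$ is $t$-intersecting.

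\emph{Main obstacle.} The delicate step is handling the ``touchings'' that arise when $g$ vanishes at isolated integer points or on whole subintervals, caused by runs of vertices outside $A\triangle B$; these are not transversal crossings by the paper's definition and must be separated from genuine crossings in the count. They can be removed by an arbitrarily small generic perturbation of the $f_E$'s (or a $C^\infty$-smoothing that preserves the sign of $f_E(i)$ at every integer $i$), which places the curves in general position without introducing any new crossings. After this perturbation the number of essential transversal intersections equals the number of sign changes of $g$ and is therefore at most $t$, completing the construction.
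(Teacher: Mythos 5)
This lemma is quoted from \cite{abab} and the present paper gives no proof of it, so there is nothing in-paper to compare against; your construction is, however, essentially the standard realization used for such statements (witness curves that dip below exactly the points of their hyperedge), and it is correct. The key count is right: since both orderings of a hyperedge pair are forbidden, the $A/B$-labeling of $A\triangle B$ has longest alternation at most $t+1$, hence at most $t$ sign changes of $g=f_A-f_B$, and each essential crossing of the two graphs accounts for one sign change. The only care needed is exactly where you flag it: the unperturbed curves can coincide on whole segments (and all coincide on your ``common envelope'' outside $[0,n+1]$), which violates the paper's requirement that crossing curves have finitely many intersection points; the perturbation must therefore be chosen so that each touching or coincidence segment resolves into zero crossings rather than an even positive number, e.g.\ by pushing $f_A$ strictly below $f_B$ throughout any maximal stretch where $g\le 0$, and by fanning the curves apart outside $[0,n+1]$. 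With that done, your argument is complete.
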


Without going into every detail, we list some further relations.
In \cite{abafree}, further variants of ABA-free hypergraphs were investigated, among others it was shown that for ABA-free hypergraphs $x$-monotonicity is not necessary to be assumed in the above results. Also, if we take in addition the complements of the hyperedges of an ABA-free hypergraph, then we get hypergraphs realizable by pseudohalfplanes. In \cite{abafree,abab}, it was shown that ABAB-free hypergraphs are equivalent to the hypergraphs realizable by pseudoparabolas and also by stabbed pseudodisks (i.e., a family of pseudodisks, all containing the origin).

\subsection{Incidence matrices}

The incidence matrix $M(H)$ of a hypergraph $H$ is the $0$--$1$ matrix obtained by associating a row to each vertex and a column to each hyperedge.
$M_{ve}$ is defined as $1$ if $v\in e$ and as $0$ if $v\notin e$.
If the vertices (and/or the hyperedges) of the hypergraph are ordered, then we require that the rows (and/or columns) of the incidence matrix are ordered the same way.
For two matrices $M$ and $P$ we say that $M$ contains $P$ if it has a subset of rows and columns that form a submatrix (in the same order) that is \emph{exactly the same} as $P$.

$M$ is $P$-free if it does not contain $P$. Be careful that this definition is different from the usual definition used in extremal $0-1$ matrix theory, where $1$'s can be changed to $0$'s to get $P$. Our definition is based on the definition of induced subgraphs of graphs while the other one is based on the definition of (not necessarily induced) subgraphs of graphs.

In \cite{abafree} equivalent definitions of being ABA-free were defined using the incidence matrix of the hypergraph. We generalize this to $(AB)^t$-free hypergraphs, which will be an essential tool in the proofs of our results about colorings.

Let $X_t^T=\begin{bmatrix}
	0 & 1 & 0 & ... \\
	1 & 0 & 1 & ...
\end{bmatrix}$ with $t$ columns and $2$ rows that contain alternately $\begin{bmatrix} 0\\ 1 \end{bmatrix}$ and $\begin{bmatrix} 1\\ 0 \end{bmatrix}$ and $X_t$ is the transpose of $X_t^T$ with $2$ columns and $t$ rows.
We denote by $X_t'$ the matrix obtained by swapping the $2$ columns of $X_t$.

\begin{thm}\label{thm:matrixequi}
	Given a hypergraph $H$, the following are equivalent for every integer $t$:
	\begin{itemize}
		\item[(a)] $H$ is an $(AB)^{t/2}$-free hypergraph,
		\item[(b)] there is a permutation of the rows of $M(H)$ such that the matrix becomes  
		$X_{t}$-free and $X_{t}'$-free,
		\item[(c)] there is a permutation of the rows and columns of $M(H)$ such that the matrix becomes  
		$X_{t-1}$-free.
	\end{itemize}
\end{thm}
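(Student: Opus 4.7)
The plan is to prove the equivalence by the chain $(a) \Leftrightarrow (b)$ and $(b) \Leftrightarrow (c)$. The first equivalence is essentially a translation of the definition: a vertex ordering corresponds to a row permutation of $M(H)$, and for a pair of hyperedges $A,B$ (corresponding to columns $c_1,c_2$), they form an $(AB)^{t/2}$-sequence in some labeling precisely when the row-permuted matrix contains $t$ rows in columns $(c_1,c_2)$ either in the pattern $[0,1],[1,0],\ldots$ (labeling $A=c_2,B=c_1$, giving $X_t$) or in the pattern $[1,0],[0,1],\ldots$ (labeling $A=c_1,B=c_2$, giving $X_t'$). Since the pair of hyperedges is unordered, being $(AB)^{t/2}$-free corresponds exactly to forbidding both $X_t$ and $X_t'$ after some row permutation.

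For $(c) \Rightarrow (b)$, I would observe that the first $t-1$ rows of $X_t$ and the last $t-1$ rows of $X_t'$ each form the pattern $X_{t-1}$, so any $X_{t-1}$-free matrix is automatically $X_t$-free and $X_t'$-free, and the row permutation given by $(c)$ also witnesses $(b)$. The main content is in $(b) \Rightarrow (c)$: starting from a row ordering satisfying $(b)$, I would define a column preference $c_A \prec c_B$ to hold iff the first row (in row order) where $c_A$ and $c_B$ differ has $c_A = 1$ and $c_B = 0$. This is antisymmetric and total on distinct columns, and the nontrivial claim is transitivity: if $A \prec B$ and $B \prec C$, letting $r,s$ be the respective first-disagreement rows, the cases $r<s$, $r>s$, $r=s$ each yield $A \prec C$ (the last is impossible, as it would force $c_B$ to take two values at one row).

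Extending $\prec$ to a total order by breaking ties among identical columns arbitrarily, I would permute the columns accordingly and verify $X_{t-1}$-freeness. For any two columns with $c_A$ placed before $c_B$ in the new order, the subsequence of rows on which they disagree begins with the pattern $[1,0]$ by construction; if this disagreement sequence consists of $r$ maximal runs of equal patterns, then the longest $[1,0]$-starting alternating subsequence has length $r$ while the longest $[0,1]$-starting one has length only $r-1$. Condition $(b)$ bounds the total longest alternating subsequence by $t-1$, forcing $r \le t-1$, so the $[0,1]$-starting alternation has length at most $t-2$, ruling out any $X_{t-1}$ pattern in these two columns. The main obstacle is this last direction, and within it the transitivity claim; once transitivity is in hand, the run-counting step is elementary.
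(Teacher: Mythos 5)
Your proposal is correct and follows essentially the same route as the paper: the same translation for $(a)\Leftrightarrow(b)$, the same observation that $X_t$ and $X_t'$ both contain $X_{t-1}$ (plus invariance of $\{X_t,X_t'\}$-freeness under column swaps) for $(c)\Rightarrow(b)$, and the same ``order columns by their first disagreement row'' idea for $(b)\Rightarrow(c)$. The only differences are cosmetic: your final verification counts maximal runs of the disagreement pattern, whereas the paper prepends the first disagreement row to the alleged copy of $X_{t-1}$ to exhibit a forbidden $X_t'$; note that your orientation of the order (the column carrying the $1$ in the first disagreement row is placed first) is the one under which this verification actually works.
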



\begin{cor}\label{cor:matrixequi}
	$\cal H$ is dual-$(AB)^{t/2}$-free if and only if its incidence matrix has a row and column ordering which is $X_{t-1}^T$-free.
\end{cor}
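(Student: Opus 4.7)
The plan is to derive the corollary as a direct consequence of part (c) of Theorem \ref{thm:matrixequi} applied to the dual hypergraph. The key point is translating the statement about $H^*$ back into a statement about $M(H)$.

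First I would note the following purely formal facts. If $H^*$ denotes the dual hypergraph of $H$, then $M(H^*) = M(H)^T$, since vertices and hyperedges exchange roles. Consequently, a permutation of the rows of $M(H^*)$ corresponds to a permutation of the columns of $M(H)$, and similarly for columns/rows. Moreover, by the definition of pattern containment given in Section~1.2, a matrix $M$ contains a pattern $P$ if and only if $M^T$ contains $P^T$; in particular $M(H^*)$ is $X_{t-1}$-free (under some row/column ordering) if and only if $M(H)$ is $X_{t-1}^T$-free (under the corresponding column/row ordering).

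With these bookkeeping observations in place, the proof is essentially a one-line application of Theorem \ref{thm:matrixequi}. By definition, $H$ is dual-$(AB)^{t/2}$-free if and only if $H^*$ is $(AB)^{t/2}$-free. By Theorem \ref{thm:matrixequi}(a)$\Leftrightarrow$(c) applied to $H^*$, this holds if and only if there is a permutation of the rows and columns of $M(H^*)$ that makes it $X_{t-1}$-free. Transposing and using the observations from the previous paragraph, this is equivalent to the existence of a row and column ordering of $M(H)$ that is $X_{t-1}^T$-free.

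There is no real obstacle here beyond making sure the transpositions are applied consistently; once one fixes the convention that rows of the dual correspond to columns of the primal, the corollary follows immediately. The only thing to be slightly careful about is that Theorem \ref{thm:matrixequi}(c) allows both row \emph{and} column permutations, which is precisely what we need, since the hyperedges of $H^*$ (equivalently, the vertices of $H$) are unordered in the original hypothesis, and dualizing interchanges the two kinds of freedom.
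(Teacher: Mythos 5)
Your proof is correct and is exactly the intended derivation: the paper states the corollary without proof as an immediate consequence of Theorem~\ref{thm:matrixequi}(c) applied to the dual hypergraph, whose incidence matrix is the transpose of $M(H)$. Your bookkeeping about transposition of patterns and the exchange of row/column permutations is precisely what makes the implication go through.
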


In particular, $\cal H$ is dual-ABAB-free if and only if its incidence matrix has a row and column ordering which is $\begin{bmatrix}
		0 & 1 & 0\\
		1 & 0 & 1
	\end{bmatrix}$-free.

Theorem \ref{thm:matrixequi} has also the following nice consequence, using a result of Kor\'andi, Pach and Tomon \cite{korandipachtomon}.
We give the details of the implication later.

\begin{cor}\label{cor:homogeneous}
	For every integer $t$ there exists a constant $c$ such that if $H$ is an $(AB)^{t/2}$-free hypergraph on $n$ vertices and with $n$ hyperedges then there exist a subset $V_0$ of its vertices and a subset $E_0$ of its hyperedges such that $|V_0|,|E_0|\ge cn$ and either $v\in e$ for all $v\in V_0, e\in E_0$ or $v\notin e$ for all $v\in V_0, e\in E_0$.
\end{cor}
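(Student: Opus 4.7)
The plan is to combine Theorem \ref{thm:matrixequi} with the theorem of Kor\'andi, Pach and Tomon \cite{korandipachtomon}, which (in the form we need) asserts that for every fixed 0--1 pattern $P$ there is a constant $c=c(P)>0$ such that every $P$-free $n\times n$ 0--1 matrix contains a $cn\times cn$ all-0 or all-1 submatrix. The aim is to reduce the hypergraph statement to a statement about pattern-avoiding matrices and then apply this theorem verbatim.

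First I would use Theorem \ref{thm:matrixequi}(c): since $H$ is $(AB)^{t/2}$-free, there exists a permutation of the rows and columns of the incidence matrix $M(H)$ after which $M(H)$ is $X_{t-1}$-free. Crucially, $X_{t-1}$ is a fixed 0--1 matrix whose size depends only on $t$, so we now have an $n\times n$ 0--1 matrix avoiding a fixed pattern.

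Next I would apply the Kor\'andi--Pach--Tomon theorem to this reordered matrix with $P=X_{t-1}$. This produces a constant $c=c(t)>0$, a set of rows $V_0$ with $|V_0|\ge cn$, and a set of columns $E_0$ with $|E_0|\ge cn$, such that the $V_0\times E_0$ submatrix is homogeneous. Translating back to the hypergraph, $V_0$ is a set of vertices and $E_0$ a set of hyperedges; in the all-1 case we get $v\in e$ for every $v\in V_0$ and every $e\in E_0$, while in the all-0 case we get $v\notin e$ for every such pair. Permuting rows and columns of $M(H)$ does not affect the hypergraph structure, so the conclusion holds for $H$ itself.

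The main (essentially the only) obstacle is to quote the Kor\'andi--Pach--Tomon result in exactly the right form, namely that for an arbitrary fixed 0--1 pattern $P$ one already obtains a \emph{linear-sized} (rather than merely polynomial-sized) homogeneous submatrix; once that statement is in place, the corollary is an immediate consequence of the fact that Theorem \ref{thm:matrixequi}(c) characterises $(AB)^{t/2}$-freeness as the avoidance of the fixed 0--1 pattern $X_{t-1}$ in the incidence matrix.
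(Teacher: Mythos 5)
Your proposal follows exactly the route the paper takes: reorder $M(H)$ via Theorem~\ref{thm:matrixequi} so that it avoids a fixed pattern, then invoke Kor\'andi--Pach--Tomon to extract a $cn\times cn$ homogeneous submatrix. The one real issue is the form in which you quote their theorem. The statement ``for every fixed $0$--$1$ pattern $P$, every $P$-free $n\times n$ matrix contains a linear-sized homogeneous submatrix'' is \emph{false} in general (and Kor\'andi, Pach and Tomon themselves exhibit patterns for which no linear-sized homogeneous submatrix is guaranteed). Their theorem, as the paper uses it, applies to patterns $P$ that are $2\times k$ (equivalently, $k\times 2$) matrices containing no homogeneous $2\times 2$ submatrix. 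So the step you flag as ``quote the result in exactly the right form'' is not merely a formality: as you state it, the cited result does not exist, and the argument would not go through for an arbitrary pattern.

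The repair is immediate and is precisely the check the paper performs: $X_{t-1}$ is a $(t-1)\times 2$ matrix whose rows alternate between $\begin{bmatrix}0 & 1\end{bmatrix}$ and $\begin{bmatrix}1 & 0\end{bmatrix}$, so no two of its rows agree in both coordinates with a constant value, i.e.\ it contains no homogeneous $2\times 2$ submatrix. Hence the Kor\'andi--Pach--Tomon theorem does apply with $P=X_{t-1}$, and the rest of your argument (translating the homogeneous submatrix back into $V_0$ and $E_0$) is fine. Add this one-line verification and state the KPT hypothesis correctly, and your proof coincides with the paper's.
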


Note that due to the symmetry on vertices and hyperedges in the conclusion, the same holds for dual-$(AB)^t$-free hypergraphs.

For example, applying this theorem for ABAB-free hypergraphs implies that given $n$ points in the plane and $n$ stabbed pseudodisks, then there are $cn$ points and $cn$ disks such that either all of these pseudodisks contain all $cn$ points or none of them contains any of the points.

%

\subsection{Colorings}

The chromatic number $\chi$ of a hypergraph $H$ is the least number $c$ such that its vertices admit a \emph{proper $c$-coloring}, i.e., a coloring in which every hyperedge is non-monochromatic (i.e., contains two vertices with different colors).
Given a family of hypergraphs $\FF$, we denote by $\chi(\FF)$ the maximum of $\chi(H)$ over $H\in \FF$.

Motivated by cover-decomposability and conflict-free coloring problems of hypergraphs realizable by geometric objects, the following coloring problem is even more important in our setting.
Given a family of hypergraphs $\FF$ and a positive integer $c$, let $m(\FF,c)$ denote the least integer such that the vertices of every hypergraph $H\in\FF$ can be colored with $c$ colors such that every hyperedge of size at least $m(\FF,c)$ is non-monochromatic.
In other words, for every hypergraph $H\in\FF$ the sub-hypergraph of $H$ that consists of all the hyperedges of size at least $m(\FF,c)$ is proper $c$-colorable.
We denote by $\chi_m(\FF)$ the least integer $c$ for which such a finite $m(\FF,c)$ exists (otherwise, define $\chi_m(\FF)=\infty)$.
Investigating this parameter for geometrically realizable families has a huge literature, with results about translates and homothets of disks, convex polygons in the plane, pseudodisk families and many other families of regions. We refer the reader to the webpage \cite{cogezoo} for an up-to-date interactive database of results.

Note that vertex-coloring the dual of a hypergraph is equivalent to edge-coloring the original hypergraph, or in the geometric setting, to coloring the regions instead of the points. This motivated the study of coloring problems of dual hypergraphs. In particular, proper coloring dual-ABAB-free hypergraphs is equivalent to coloring stabbed pseudo-disks such that no point contained in at least two pseudo-disks is monochromatic. Similar equivalence holds for $\chi_m$.

ABA-free hypergraphs were first studied in \cite{abafree} where the authors generalized a polychromatic (i.e., where every color has to appear on every hyperedge) coloring result of Smorodinsky and Yuditsky \cite{smorodinsky-yuditsky} about halfplanes to pseudohalfplanes. In particular, in \cite{abafree} it was shown that for the family of ABA-free hypergraphs we have $\chi_m(\FF)=2$ and then in \cite{kbdiscretehelly} it was shown, among others, that we also have $\chi(\FF)=3$. As the dual of an ABA-free hypergraph is also ABA-free \cite{abafree}, these hold for the dual families too. About further properties of $ABA$-free hypergraphs, see also \cite{pseudoconvex}.

Continuing with ABAB-free hypergraphs, in \cite{abafree} it was shown that they are not always $2$-colorable. Then in \cite{abab}, it was shown that this is true even if we restrict ourselves to hyperedges of size at least $m$ for some fixed $m$. On the other hand, it was also shown that ABAB-free hypergraphs always admit a proper $3$-coloring. Summarizing, for the family of ABAB-free hypergraphs we have $\chi(\FF)=\chi_m(\FF)=3$.

In this paper, we mainly study dual-ABAB-free hypergraphs, about which not much was known earlier. For proper colorings of dual-ABAB-free hypergraphs, an upper bound of $4$ follows from earlier results \cite{psdiskwrtpsdisk} and here we show a simple construction that $4$ is possible, and so we establish the following.

\begin{claim}\label{claim:chi4}
	For every dual-ABAB-free hypergraph $\chi\le 4$, while there exist dual-ABAB-free hypergraphs with $\chi=4$.
\end{claim}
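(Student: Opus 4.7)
The upper bound $\chi \le 4$ is not new; it follows from the results of \cite{psdiskwrtpsdisk} together with the equivalence (noted in Section~1.1) between dual-ABAB-free hypergraphs and incidence hypergraphs of stabbed pseudodisk families. A proper coloring of such a hypergraph amounts to a coloring of the pseudodisks in which every point lying in at least two pseudodisks sees two colors, which the cited work provides with $4$ colors. My plan therefore focuses on the lower bound.

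For the lower bound, I would take $H=K_4$: the complete graph on four vertices, regarded as a $2$-uniform hypergraph. Clearly $\chi(K_4)=4$, so it only remains to verify that $K_4$ is dual-ABAB-free. By Corollary~\ref{cor:matrixequi} this is equivalent to producing a row and column ordering of $M(K_4)$ avoiding the pattern $X_3^T$, or equivalently (unwrapping the definition of dual) to exhibiting a vertex ordering of the dual hypergraph $H^*$ that is ABAB-free. Here $H^*$ has vertex set $\{e_{ij} : 1\le i<j\le 4\}$ (the edges of $K_4$) and four hyperedges $H_v=\{e_{vk} : k\ne v\}$ of size $3$, one per $v\in\{1,2,3,4\}$.

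I would use the order $e_{14},\,e_{13},\,e_{12},\,e_{23},\,e_{24},\,e_{34}$. For each of the six pairs $(H_u,H_v)$, the two elements of $H_u\setminus H_v$ and the two of $H_v\setminus H_u$ appear in this order as one of the safe patterns $AABB$, $BBAA$, or $BAAB$, never as the alternation $ABAB$ or $BABA$. This is a routine finite check; for instance, for the pair $(H_2,H_3)$ the unique-to-$H_2$ elements $e_{12},e_{24}$ sit at positions $3,5$, while the unique-to-$H_3$ elements $e_{13},e_{34}$ sit at positions $2,6$, giving the safe pattern $BAAB$. The other five pairs are handled identically, and the three pairs involving $H_1$ even produce $AABB$. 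Thus $H^*$ is ABAB-free and $K_4$ is dual-ABAB-free.

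There is no real obstacle here: the only conceptual step is recognizing that $K_4$ itself admits a dual-ABAB-free realization, after which the argument is a six-line verification. If one prefers the matrix formulation of Corollary~\ref{cor:matrixequi}, the row order $1,4,3,2$ combined with the column order above yields an $X_3^T$-free incidence matrix, which gives the same conclusion. A geometric witness (four stabbed pseudodisks in general position together with six ``pair-only'' points) could be displayed in a picture, but the combinatorial check via $K_4$ is the cleanest way to present the construction.
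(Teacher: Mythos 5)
Your proposal is correct and follows essentially the same route as the paper: the upper bound is quoted from the pseudodisk intersection result of \cite{psdiskwrtpsdisk}, and the lower bound realizes $K_4$ as a dual-ABAB-free hypergraph by an explicit ordering (your edge order is, up to reversal and relabeling, the paper's column order, and your six pairwise checks all verify). The only cosmetic difference is that you check ABAB-freeness of the dual directly on the hyperedge pairs, while the paper displays the reordered $X_3^T$-free incidence matrix.
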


This implies that the $\chi_m$ of the family of dual-ABAB-free hypergraphs is also at most $4$. Our main coloring result shows that it is also larger than $2$.

\begin{thm}\label{thm:not2col}
	For every $m$ there exists an $m$-uniform dual-ABAB-free hypergraph which is not proper $2$-colorable.
\end{thm}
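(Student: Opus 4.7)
The plan is to apply Corollary~\ref{cor:matrixequi}, which reduces the task to constructing, for every $m$, a $0$--$1$ matrix $M$ whose rows and columns admit orderings avoiding the pattern $\mabab$, with every column summing to exactly $m$, and such that no 2-coloring of the rows leaves every column bichromatic. Via Lemma~\ref{lem:ABAB2parabolas} this is equivalent to a concrete geometric task: construct an $m$-fold cover of a point set by stabbed pseudodisks (pseudodisks sharing a common point) which is not 2-decomposable into 1-covers.

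The base case $m=2$ is supplied by the incidence matrix of the triangle $C_3$: in the natural $3\times 3$ ordering no row has the pattern $010$ across the three columns, so no $\mabab$ submatrix arises; every column sums to $2$; and $C_3$ is not 2-colorable.

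For the inductive step from $m-1$ to $m$, I would take $T$ disjoint block-diagonal copies of a witness $M_{m-1}$ and prepend several ``linking'' rows at the top of the global row ordering, designed so that each column's sum rises from $m-1$ to exactly $m$. Choosing each linking row to be constant (all-zero or all-one) on every block's column-range preserves $\mabab$-freeness: inside a single block the matrix equals $M_{m-1}$ and is $\mabab$-free by induction; across distinct blocks the top row in a would-be $\mabab$ pattern must be $010$, which for a block-constant row forces the three columns to lie in three distinct blocks, and then the bottom row would need to be $101$ across columns of those three blocks---impossible for a block row (zero outside its own block), and impossible for another block-constant linking row as long as the ``meta-matrix'' of linking rows by blocks is itself $\mabab$-free.

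The main obstacle is ensuring that non-2-colorability survives this blow-up. The simplest construction with one linking row per block is always 2-colorable: color each linking row one color and its block the opposite color, making every column bichromatic. To break this, the linking rows must each cover several blocks in an entangled way. The plan is to take enough blocks $T$ and enough linking rows $k$ so that the meta-matrix is simultaneously $\mabab$-free and combinatorially rich, and then invoke a pigeonhole argument over the $2^k$ possible colorings of the linking rows: for every 2-coloring of the rows of $M$, some block inherits a coloring that, together with the fixed colors of its incident linking rows, constitutes a labeled 2-coloring problem on $M_{m-1}$ with no satisfying solution, so the non-2-colorability of $M_{m-1}$ forces a monochromatic column of $M_m$. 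Calibrating $T$, $k$, and the linking-row patterns so that the $\mabab$-avoidance, the uniform column sum $m$, and the pigeonhole non-colorability all coexist is the technically delicate step; the same difficulty appears on the geometric side as the task of layering stabbed pseudodisks into a non-2-decomposable $m$-cover.
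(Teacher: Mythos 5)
Your reduction via Corollary~\ref{cor:matrixequi} and Lemma~\ref{lem:ABAB2parabolas} is the right starting point, and your base case is fine: the $3\times 3$ incidence matrix of the triangle has no row equal to $0\,1\,0$, hence no copy of $\mabab$, and the triangle is not $2$-colorable. But the inductive step is where the entire difficulty of the theorem lives, and you have not supplied it. You correctly observe that the naive blow-up (one linking row per block) is always $2$-colorable, and you then defer exactly the point that has to be proved: producing linking rows that are ``entangled'' enough to defeat every $2$-coloring while the meta-structure, the block-constancy conditions, and the global row/column orders simultaneously keep the matrix $\mabab$-free. Note also that the coloring constraint a block inherits is weaker than non-$2$-colorability of $M_{m-1}$: a column with $m-1$ ones in the block and one linking $1$ colored red is only violated if \emph{all} its block rows are red, so if every linking row meeting a block has the same color the block can escape by using the opposite color throughout. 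Your pigeonhole argument therefore needs linking rows of both colors hitting each block in a controlled pattern, and arranging this compatibly with $\mabab$-freeness is precisely the unproved core. As stated, the proposal is a plan, not a proof.

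For comparison, the paper does not induct on $m$ at all. It takes the classical non-$2$-colorable $m$-uniform hypergraph $H(m,m)$ built on a full $m$-ary tree of depth $m-1$, whose hyperedges are the sibling sets (``horizontal'') and the root-to-leaf paths (``vertical''); non-$2$-colorability is immediate (a properly colored root forces a child of the other color, and so on down to a leaf, or else some sibling set is monochromatic). The substance of the proof is then a concrete choice of orderings --- vertices in BFS order, vertical hyperedges by their leaves, horizontal hyperedges interleaved as late as possible before their vertices first appear in a vertical hyperedge --- followed by a five-case analysis showing that no copy of $\mabab$ can occur, split according to which of the three columns in a putative occurrence are horizontal or vertical. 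Your linking-row picture is morally a re-description of this tree (blocks are subtrees, linking rows are the sibling sets near the root), so if you push your plan through you will likely rediscover $H(m,m)$; but until the $\mabab$-freeness of a specific such construction is verified, the proof is incomplete.
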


Thus, the $\chi_m$ of dual-ABAB-free hypergraphs is either $3$ or $4$; it remains an open problem to determine which one is the right answer.\footnote{We note that there are similar problems where deciding whether $\chi_m$ is $3$ or $4$ turned out to be much harder than proving that it is at least $3$. In particular, for hypergraphs realizable by disks it was only recently shown that $\chi_m=4$ \cite{damdom}, while for their duals it is still not known if the answer is $3$ or $4$ \cite{wcf2}.}

Continuing, for ABABA-free hypergraphs it was shown in \cite{abab} that $\chi=\chi_m=\infty$. This implies that for $(AB)^{t/2}$-free hypergraphs for any $t\ge 5$ we have $\chi=\chi_m=\infty$.

Finally, dual-ABABA-free hypergraphs and in general dual-$(AB)^{t/2}$-free hypergraphs with $t\ge 5$ were not investigated before and as of now it is not even known if $\chi_m$ is finite for these families.

\subsection{The maximal number of hyperedges}\label{sec:sizes}

We also consider bounding the maximal possible number of hyperedges in the considered hypergraphs. In this respect, we determine the exact orders of magnitude.

\begin{thm}\label{thm:ext}
	Let $n\ge t\ge 3$.\\
	The maximal size of an $(AB)^{t/2}$-free hypergraph on $n$ vertices is $\Theta(n^{t-1})$, where the hidden constant depends on $t$.\\
	The maximal size of a dual-$(AB)^{t/2}$-free hypergraph on $n$ vertices is $\Theta(tn^{2})$, where the hidden constant does not depend on $t$.
\end{thm}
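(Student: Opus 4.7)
I will treat the two halves separately via the geometric equivalences of Lemmas~\ref{lem:parabolas2ABAB} and~\ref{lem:ABAB2parabolas}, which say that $(AB)^{t/2}$-free hypergraphs are exactly the incidence hypergraphs of points and $(t-2)$-intersecting upward regions: the primal bound will come from a VC-dimension argument paired with a polynomial-curve construction, and the dual bound will come from planar-arrangement complexity for such families of curves.

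\textbf{Part 1.} The first step is to observe that every $(AB)^{t/2}$-free hypergraph $H$ has VC-dimension at most $t-1$: were $t$ ordered vertices $v_{i_1}<\dots<v_{i_t}$ shattered, the hyperedges $A,B$ realizing the odd- and even-indexed subsets among them would form the $(AB)^{t/2}$-sequence $v_{i_1}<\dots<v_{i_t}$, a contradiction. The Sauer--Shelah lemma then yields $|E(H)|\le\sum_{k\le t-1}\binom{n}{k}=O(n^{t-1})$. For the matching lower bound I would place $n$ points $(i,y_i)$ with generic $y_i$'s and take as hyperedges the upward regions of all polynomials $P$ of degree at most $t-2$; since any two such polynomials intersect at most $t-2$ times, Lemma~\ref{lem:parabolas2ABAB} makes the resulting hypergraph $(AB)^{t/2}$-free. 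In the $(t-1)$-dimensional parameter space of polynomials, each point $(i,y_i)$ gives a hyperplane ``$P(i)=y_i$''; for generic $y_i$'s these hyperplanes are in general position, and their arrangement has $\Theta(n^{t-1})$ cells, each corresponding to a distinct hyperedge.

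\textbf{Part 2.} For the upper bound, $H^*$ is $(AB)^{t/2}$-free, so Lemma~\ref{lem:ABAB2parabolas} realizes it by $m$ points and $n$ upward regions whose boundaries are pairwise $(t-2)$-intersecting; the planar arrangement of these $n$ curves has at most $(t-2)\binom{n}{2}$ intersection vertices and (by Euler) $O(tn^2)$ faces, so the number of distinct ``sign vectors'' (sets of curves whose upward region contains a point) is $O(tn^2)$, whence $m=O(tn^2)$ since the $m$ vertices of $H^*$ must have pairwise distinct sign vectors. For the matching lower bound I would take $n$ polynomial curves of degree $t-2$ in sufficiently general position so that every pair intersects in exactly $t-2$ points, producing an arrangement with $\Theta(tn^2)$ cells; placing one point in each cell and invoking Lemma~\ref{lem:parabolas2ABAB} gives a hypergraph whose dual has $n$ vertices and $\Theta(tn^2)$ hyperedges. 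The main obstacle is to ensure that the placed points really yield $\Theta(tn^2)$ \emph{distinct} sign vectors, because two cells may \emph{a priori} share a sign vector through wrap-around at infinity---already for $n$ upward parabolas, the two unbounded cells with $x\to\pm\infty$ both lie below every curve. I plan to handle this by choosing the leading coefficients of the polynomials so that the asymptotic sign vectors at $x\to+\infty$ and $x\to-\infty$ are distinct; only $O(n)$ pairs of cells can then coincide in sign vector, which is negligible against $\Theta(tn^2)$, so the construction still produces $\Theta(tn^2)$ distinct hyperedges of the dual.
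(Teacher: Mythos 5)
Your primal bounds are essentially fine. The VC-dimension/Sauer--Shelah upper bound is exactly the paper's argument (and you supply the shattering detail the paper omits), while your lower bound --- points $(i,y_i)$ against the upward regions of all polynomials of degree at most $t-2$, counted via the cells of the arrangement of $n$ hyperplanes in general position in the $(t-1)$-dimensional coefficient space --- is a valid geometric alternative to the paper's purely combinatorial construction (unions of prefixes of $t-1$ intervals of the vertex order). The dual upper bound also coincides with the paper's Euler-formula argument.

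The gap is in the dual lower bound. Your claim that, once the leading coefficients are chosen suitably, ``only $O(n)$ pairs of cells can coincide in sign vector'' is not justified and is in fact false: coincidences are not only caused by wrap-around at infinity. Two bounded faces at the same level $k$ share a below-set whenever the set of $k$ lowest curves is the same at their abscissae, and this can happen massively. Concretely, take $P_i(x)=c_i R(x)+\varepsilon_i$, where $R$ is a fixed degree-$(t-2)$ polynomial with $t-2$ simple real roots, the $c_i>0$ are distinct and the $\varepsilon_i$ are tiny and generic. Every pair crosses exactly $t-2$ times (once near each root of $R$), so the arrangement has $\Theta(tn^2)$ faces; but near every root of $R$ the vertical order of the curves performs the same reversal between the permutation $c_1<\dots<c_n$ and its reverse, through the same multiset of intermediate orders, so the $k$-lowest sets arising near different roots coincide and the total number of distinct below-sets is only $O(n^2)$. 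This family satisfies everything you ask of your construction (degree $t-2$, distinct leading coefficients, exactly $t-2$ crossings per pair), so ``sufficiently general position'' cannot rescue the argument without saying what it means --- note also that for truly generic coefficients a pair of degree-$(t-2)$ polynomials has far fewer than $t-2$ real crossings, so some special structure is unavoidable. The paper gets around precisely this distinctness problem by design: it reserves $t'=\lfloor(t-2)/2\rfloor$ marker curves $B_1,\dots,B_{t'}$, exactly one of which is bottommost in each of $t'$ vertical strips, so that the unique $B_j$ appearing in a hyperedge records which strip it came from; within each strip the remaining curves realize all circular intervals of their cyclic order, giving $t'\binom{n-t'-1}{2}$ provably distinct hyperedges. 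Some such explicit mechanism certifying that the $\Theta(t)$ batches of $\Theta(n^2)$ hyperedges are pairwise distinct is what your proof is missing.
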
	

The proof of this theorem is quite standard, for similar methods, see for example \cite{sharir,romgunter}. For the primal $t/2$-uniform case for $t$ even, i.e., when we consider the maximal number of hyperedges only of size $t/2$ in an $(AB)^{t/2}$-free hypergraph, F\" uredi et al. \cite{furedi} showed the exact bound $\binom{n}{t/2}-\binom{n-t/2}{t/2}$. Note that counting hyperedges of smaller size is not interesting as they cannot ruin $(AB)^{t/2}$-freeness, so all of them can be in the hypergraph. For larger hyperedges, bounding the sum of the sizes of the hyperedges (i.e., the vertex-hyperedge incidences, same as the sum of the degrees) is another interesting question; for such results, see \cite{gunby}.

\subsection{Organization of the paper}	
In Section \ref{sec:proofs}, we prove the results stated above about the incidence matrices, about colorings and about the number of hyperedges. 
In Section \ref{sec:shrinkability}, we consider two related problems, existence of shallow hitting sets and shrinkability and we prove some negative results about them. Finally, in Section \ref{sec:discussion}, we conclude with a discussion of further research directions.
	
\section{Proofs}\label{sec:proofs}
\begin{proof}[Proof of Theorem \ref{thm:matrixequi}.]
	First, ordering the vertices of the hypergraph corresponds to permuting the columns of its incidence matrix. Thus, the equivalance of $(a)$ and $(b)$ follows from the definition of $(AB)^t$-free hypergraphs. 
	
	To prove $(c)\rightarrow(b)$, suppose $(b)$ is false, i.e., that in any permutation of the rows of $M(H)$ there is an occurrence of one of the two matrices forbidden in $(b)$, $X_{t}$ and $X_{t}'$. In any permutation of $X_{t}$ or $X_{t}'$, we get back one of $X_{t}$ or $X_{t}'$, both of which contains a copy of $X_{t-1}$. Thus, by any permutation of the rows and columns of $M(H)$, we get a matrix that contains $X_{t-1}$. Thus, we can conclude that $\neg(b)\rightarrow \neg(c)$, which is the contrapositive of $(c)\rightarrow(b)$.

	Finally, we show $(b)\rightarrow(c)$. Take a permutation of rows such that it is $X_{t}$ and $X'_{t}$-free; call this matrix $M'(H)$. Notice that any ordering of the columns of $M'(H)$ gives an $X_{t}$-free and $X'_{t}$-free matrix. We define the following comparability relation on the columns of $M'(H)$: for two columns $A,B$ we have $A<B$ if and only if the first row where they differ has a $0$ in $A$ and a $1$ in $B$. It is easy to see that this gives a partial ordering of the columns, in fact, only identical columns are incomparable. By ordering the columns according to this relation, we get a matrix $M''(H)$, which is still  $X_{t}$-free and $X'_{t}$-free. We claim that $M''(H)$ is also $X_{t-1}$-free. Assume on the contrary that there exist a pair of columns $A<B$ in $M''(H)$ which contains a copy of $X_{t-1}$. As $A<B$, the first column where they differ has a $0$ in $A$ and a $1$ in $B$. Since this must be to the above of the copy of $X_{t-1}$, together they form an $X'_{t}$, so we get a contradiction.	
\end{proof}

\begin{proof}[Proof of Corollary \ref{cor:homogeneous}.]
	Kor\'andi, Pach and Tomon \cite{korandipachtomon} have recently shown that if $P$ is a $2\times k$ $0-1$ matrix that does not contain a homogeneous (i.e., all-$0$ or all-$1$) $2\times 2$ submatrix, then any matrix $A$ that does not contain $P$ must  contain a homogeneous submatrix of size $cn\times cn$ for a constant $c$ depending only on $P$.	
	
	They applied this result for the incidence matrices of ABA-free hypergraphs, using the special case of Theorem \ref{thm:matrixequi} for ABA-free hypergraphs proved in \cite{abafree}. Having Theorem \ref{thm:matrixequi} in hand, we can now also apply it to $(AB)^{t}$-free hypergraphs for any $t$ as well to conclude the statement. We only need to check that $X_{t}$ does not contain a homogeneous $2\times 2$ submatrix, which trivially holds.
\end{proof}

%
%

\begin{proof}[Proof of Claim \ref{claim:chi4}.]
The chromatic number is at most $4$ for a much wider range of hypergraphs, that is, for intersection hypergraphs of pseudo-disks with respect to pseudo-disks \cite{psdiskwrtpsdisk} (whereas dual-ABAB-free hypergraphs correspond to hypergraphs of stabbed pseudo-disks with respect to points \cite{abab}). This implies the same upper bound for dual-ABAB-free hypergraphs.

For the lower bound it is enough to prove that $K_4$ is realizable as a dual-ABAB-free hypergraph. By Corollary \ref{cor:matrixequi}, it is enough to order the rows and colums of the incidence matrix of $K_4$ such that it is $\begin{bmatrix}
0 & 1 & 0\\
1 & 0 & 1
\end{bmatrix}$-free. This can be done for example as follows:
%
\[
M=
\begin{bmatrix}
1 & 1 & 1 & 0 & 0 & 0\\
1 & 0 & 0 & 0 & 1 & 1\\
0 & 1 & 0 & 1 & 0 & 1\\
0 & 0 & 1 & 1 & 1 & 0
\end{bmatrix}\qedhere
\]
\end{proof}

\begin{proof}[Proof of Theorem \ref{thm:not2col}.]
We use the construction of \cite{indec} using the terminology of \cite{abab}.

Let $T(a,b)$ denote a full $a$-ary tree of depth $b-1$. That is, a tree in which every internal vertex has $a$ children and every leaf is at distance $b-1$ from the root of the tree (i.e., the path connecting the root to the leaf contains $b$ vertices). See Figure \ref{fig:t23}.

\begin{figure}
	\begin{center}
		\includegraphics[height=4cm]{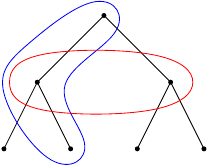}
		\caption{The tree $T(2,3)$ and one horizontal and one vertical hyperedge of $H(2,3)$.}
		\label{fig:t23}
	\end{center}
\end{figure}

Let the hypergraph $H(a,b)$ be defined as follows. Its vertex set is the vertex set of $T(a,b)$, the set of children of each internal vertex is a {\em horizontal} hyperedge of size $a$ and the set of vertices of every path from the root to a leaf is a {\em vertical} hyperedge of size $b$. 

It is easy to see that in every coloring of the vertices of $H(a,b)$ with two colors there is either a monochromatic horizontal hyperedge (of size $a$) or a monochromatic vertical hyperedge (of size $b$). Therefore, $H_2 := H(m,m)$ is an $m$-uniform non-$2$-colorable hypergraph.

By Theorem \ref{thm:matrixequi} it is enough to give an ordering of the rows and columns of the incidence matrix of $H(a,b)$ such that it is $\mabab$-free.

Visualize the underlying tree $T(a,b)$ the usual way: the root vertex is at the top, the children of every vertex are below it and connected with straight segments and vertices that are at the same distance from the root are having the same $y$-coordinate. See Figure \ref{fig:t23}.

Now order the vertices from top to bottom primarily and from left to right secondarily.
The order of the hyperedges is as follows: first, the vertical hyperedges are ordered according to the left to right order of their leaves. Second, the horizontal hyperedges are ordered from top to bottom primarily and from left to right secondarily. Finally, these two are combed together such that a horizontal hyperedge comes as late as possible maintaining that it comes before the first occurrence of any of its vertices in a vertical hyperedge.

We draw the incidence matrices using these orderings of the vertices and hyperedges. For example for $H(2,3)$ we get the following incidence matrix (1's of the horizontal hyperedges are emphasized by having red color while 1's of the vertical hyperedges by having blue color):
\[
M(2,3)=
\begin{bmatrix}
0 & 0 & \blue1 & \blue1 & 0 & \blue1 & \blue1\\
\red1 & 0 & \blue1 & \blue1 & 0 & 0 & 0\\
\red1 & 0 & 0 & 0 & 0 & \blue1 & \blue1\\
0 & \red1 & \blue1 & 0 & 0 & 0 & 0\\
0 & \red1 & 0 & \blue1 & 0 & 0 & 0\\
0 & 0 & 0 & 0 & \red1 & \blue1 & 0\\
0 & 0 & 0 & 0 & \red1 & 0 & \blue1
\end{bmatrix}.
\]

Now we prove that such an incidence matrix is indeed $\mabab$-free. Assume on the contrary that $M(a,b)$ does contain $\mabab$. We distinguish several cases depending on which of its 1 entries are red or blue. In an occurence of $\mabab$ we refer to the corresponding two vertices and three columns as $a,b$ and $A,B,C$, according to the orders defined above.

\begin{enumerate}
	\item $\begin{bmatrix}
	0 & \red1 & 0\\
	\red1 & 0 & 1
	\end{bmatrix}$  or
	$\begin{bmatrix}
	0 & 1 & 0\\
	\red1 & 0 & \red1
	\end{bmatrix}$, i.e., $A$ and at least one of $B$ and $C$ are horizontal:
	
	The red 1 enties are in a strictly descending position in the incidence matrix due to the ordering of the vertices and the horizontal hyperedges. This contradicts these two cases.
		
	\item $\begin{bmatrix}
	0 & \blue1 & 0\\
	\red1 & 0 & \blue1
	\end{bmatrix}$, i.e., $A$ is horizontal and $B,C$ are vertical:

	As the horizontal hyperdge $A$ comes as late as possible maintaining that it comes before the first occurrence of any of its vertices in a vertical hyperedge, there must be a vertical hyperedge $D$ after $A$ and before $B$ containing $b$.	By the ordering of the vertical hyperedges, if two columns contain a blue 1 entry in the same row then all the columns between them that correspond to a vertical hyperedge must also contain a (blue) 1 entry. Thus if $D,C$ contains $b$ then $B$ must also contain $b$, contradicting this case.	
				
	\item $\begin{bmatrix}
		0 & \red1 & 0\\
		\blue1 & 0 & 1
	\end{bmatrix}$, i.e., $A$ is vertical and $B$ is horizontal ($C$ is arbitrary): 
	
	By the definition of the orderings, the unique horizontal hyperedge $D$ containing $b$ must be earlier in the order than $A$. As $B$ contains $a$, a vertex earlier than $b$, $B$ must be earlier than $D$. Thus $B$ is earlier than $D$, which in turn is earlier than $A$, contradicting that $B$ is actually later than $A$.

	\item $\begin{bmatrix}
	0 & 1 & 0\\
	\blue1 & 0 & \red1
	\end{bmatrix}$, i.e., $A$ is vertical, $C$ is horizontal ($B$ is arbitrary):

	As $A$ and $C$ both contain $b$, $C$ must be earlier than $B$ according to the way we combed together horizontal and vertical hyperedges in our ordering. This contradicts this case where $C$ is later than $A$.
	
	\item $\begin{bmatrix}
	0 & \blue1 & 0\\
	\blue1 & 0 & \blue1
	\end{bmatrix}$, i.e., $A,B,C$ are all vertical:
	
	The same way as in Case 2, by the ordering of the vertical hyperedges, if two columns contain a blue 1 entry in the same row then all the columns between them that correspond to a vertical hyperedge must also contain a (blue) 1 entry. Thus if $A,C$ contains $b$ then $B$ must also contain $b$, contradicting this case.
\end{enumerate}
The above cases cover all possibilities, all leading to contradiction and thus finishing the proof.
\end{proof}

\begin{proof}[Proof of Theorem \ref{thm:ext}]
	First, we consider $(AB)^{t/2}$-free hypergraphs.
	For the lower bound split the ordered set of $n$ vertices into $t-1$ intervals of almost equal length. Let $H$ be the hypergraph whose hyperedges are the unions of prefixes of these intervals. It is easy to see that this $H$ is $(AB)^{t/2}$-free. Further, it has $\approx (\frac{n}{t-1})^{t-1}=\Omega(n^{t-1})$ hyperedges, as required.	
	For the upper bound notice that the VC-dimension of an $(AB)^{t/2}$-free hypergraph is at most $t-1$ and then by the Sauer-Shelah lemma it has $O(n^{t-1})$ hyperedges, finishing the proof.
	
	Now, we consider dual-$(AB)^{t/2}$-free hypergraphs.
	By Lemmas \ref{lem:parabolas2ABAB} and \ref{lem:ABAB2parabolas} these are exactly the hypergraphs that can be represented by a family $\C$ of upwards regions whose boundaries form a family of $(t-2)$-intersecting $x$-monotone bi-infinite curves in the plane and a finite point set $S$ such that $H(\C,S)$ is isomorphic to $H$ (the vertices correspond to curves and for each point of the plane the curves below a given point define a hyperedge).
	But $n$ such curves have at most $(t-2)\binom n2/2\le tn^2$ intersection points, so by Euler's formula they divide the plane into $O(tn^2)$ regions; this proves the upper bound for the number of edges.
	
	\begin{figure}
		\begin{center}
			\includegraphics[width=15cm]{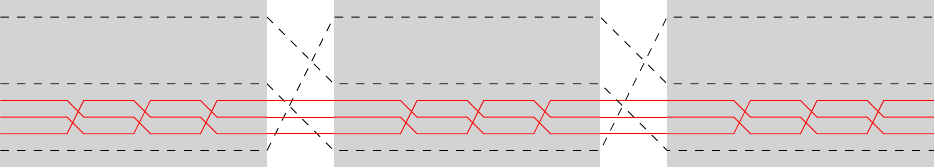}
			\caption{A construction with $n-t'=t'=3$, the non-dashed curves are the $A_i$'s while the dashed curves are the $B_i$'s. The gray areas are the $t'$ parts defined in the proof.}
			\label{fig:dualconstr}
		\end{center}
	\end{figure}

	For the lower bound we define a $(t-2)$-intersecting family of $n$ curves such that the upwards regions above them will be as required. Let $t'=\lfloor (t-2)/2 \rfloor$.
	We name these curves $A_1,\ldots,A_{n-t'},$ $B_1,\ldots,B_{t'}$.
	Always one of the $B_i$ will be the lowest curve (when the curves are intersected by a vertical line).
	First, $B_1$ will be the bottom one, then $B_2$, etc, finally $B_t$.
	This way we define $t'$ vertical strip parts in the plane.
	At each part, all but one of the curves $B_j$ are above every $A_i$.
	This ensures that at each part when we consider a region from between the $A_i$'s, then the only $B_j$ in the respective hyperedge will be the bottommost.
	So hyperedges from different parts containing a non-empty proper subset of the $A_i$'s always differ.
	
	Now we describe how the $A_i$'s in each of the $t'$ parts looks like. See Figure \ref{fig:dualconstr}.
	The curves $A_i$ start in any order, and then the bottommost curve (from the $A_i$'s) goes to the top (of the $A_i$'s), $t'$ times, i.e., each $A_i$ goes to the top exactly once.
	This way each circular interval of the initial order of the $A_i$'s is represented by a region.	
	It is clear from the construction that any two $A_i$'s intersect at most $2$ times in each part, altogether at most $2t'$ times. Two $B_j$'s, or an $A_i$ and a $B_j$ intersect at most twice. Thus, the family of curves is $(t-2)$-intersecting.
	As we get any circular interval of the $A_i$'s $t'$ times as a hyperedge, each time with a different $B_j$, we have at least $t'\binom {n-t'-1}{2}$ different hyperedges. As $t'=\lfloor (t-2)/2 \rfloor$ and $n\ge t>2t'$, this is indeed $\Omega(tn^2)$, where the hidden constant does not depend on $t$.
\end{proof}

\section{Shallow hitting sets and shrinkability}\label{sec:shrinkability}

Another notion that was studied for ABA-free hypergraphs and its relatives, is the existence of a $c$-shallow hitting set for some $c$, that is, the existence of a subset of the vertices of a hypergraph $H$, $V'\subset V(H)$, such that every hyperedge contains at least $1$ and at most $c$ vertices of $V'$. It was shown in \cite{abafree} that ABA-free hypergraphs admit $2$-shallow hitting sets (by finding unskippable vertices; see Section \ref{sec:shrinkability}). Note that the existence of a $c$-shallow hitting set provides a proper $2$-coloring of the hyperedges of size at least $c+1$ (indeed, $V'$ gets the first color, the remaining vertices the second color). As such a coloring does not always exist for ABAB-free \cite{abab} and dual-ABAB-free (Theorem \ref{thm:not2col}), and therefore also for longer-alternation-free hypergraphs, we get that such hypergraphs also cannot always have shallow hitting sets.

\bigskip
For ABA-free hypergraphs it was shown in \cite{abafree} that every hyperedge contains a so-called \emph{unskippable vertex}, i.e., a vertex which we can add as a singleton hyperedge to the hypergraph so that in remains ABA-free.
In case of ABAB-free hypergraphs, adding a singleton hyperedge can never create an ABAB pattern.
However, in \cite{abab} it was shown that every hyperedge contains a so-called \emph{unsplittable pair} of vertices, i.e., two vertices which we can add as a hyperedge of size two to the hypergraph so that in remains ABAB-free.
One might suspect that similar statements also hold for ABABA-free hypergraphs, however, this is not the case.
Below we provide some counterexamples to such statements, all of which were found by a straightforward computer search (we used SAT solvers in Python).

First, we give an ABABA-free hypergraph on 9 vertices.
Below we present the transpose of the incidence matrix, so vertices correspond to columns and hyperedges correspond to rows.
By Theorem \ref{thm:matrixequi}, thus these matrices are free of 
$X_5^T=\begin{bmatrix}
	0 & 1 & 0 & 1 & 0 \\
	1 & 0 & 1 & 0 & 1
\end{bmatrix}$
and of
$(X_5')^T=\begin{bmatrix}
	1 & 0 & 1 & 0 & 1 \\
	0 & 1 & 0 & 1 & 0
\end{bmatrix}$
and we claim that the first hyperedge, $\{2,4,6,8\}$, has no unsplittable pair.
This means that for every pair of vertices from $\{2,4,6,8\}$, there is a hyperedge witnessing its splittability, i.e., adding that pair as a hyperedge would create one of the above two matrices.
In the below matrix, for each hyperedge we marked for which vertex pair it is a witness.
For example, the second hyperedge shows that the vertex pair $\{2,4\}$ cannot be added as a hyperedge, as then the second hyperedge and the new hyperedge would form an ABABA-sequence on vertices $(1,2,3,4,6)$. 

\[
\begin{array}{l|ccccccccc}
	~ & 1 & 2 & 3 & 4 & 5 & 6 & 7 & 8 & 9 \\
	\hline
	\text{first} & 0 & 1 & 0 & 1 & 0 & 1 & 0 & 1 & 0 \\
	(2,4) & 1 & 0 & 1 & 0 & 0 & 1 & 0 & 0 & 0 \\
	(2,6) & 1 & 0 & 0 & 1 & 0 & 0 & 1 & 1 & 0 \\
	(2,8) & 1 & 0 & 0 & 1 & 0 & 0 & 0 & 0 & 1 \\
	(4,6) & 1 & 1 & 1 & 0 & 1 & 0 & 0 & 1 & 0 \\
	(4,8) & 1 & 0 & 0 & 0 & 0 & 1 & 0 & 0 & 1 \\
	(6,8) & 0 & 1 & 0 & 0 & 0 & 0 & 1 & 0 & 1
\end{array}
\]

The below, even simpler example shows that we cannot even hope to add an `unsplittable triple' to an ABABA-free hypergraph.

\[
\begin{array}{l|ccccccccc}
	~ & 1 & 2 & 3 & 4 & 5 & 6 & 7 & 8 & 9 \\
	\hline
	\text{first} & 0 & 1 & 0 & 1 & 0 & 1 & 0 & 1 & 0 \\
	(2,4,6) & 1 & 0 & 0 & 0 & 1 & 0 & 0 & 1 & 0 \\
	(2,4,8) & 1 & 0 & 0 & 0 & 0 & 1 & 0 & 0 & 1 \\
	(2,6,8) & 1 & 0 & 0 & 1 & 0 & 0 & 0 & 0 & 1 \\
	(4,6,8) & 0 & 1 & 0 & 0 & 1 & 0 & 0 & 0 & 1 
\end{array}
\]

Finally, the next example shows that we cannot always add an `unsplittable triple' to an ABABAB-free hypergraph.

\[
\begin{array}{l|cccccccccc}
	~ & 1 & 2 & 3 & 4 & 5 & 6 & 7 & 8 & 9 & 10 \\
	\hline
	\text{first} & 0 & 1 & 0 & 1 & 0 & 1 & 0 & 1 & 0 & 1 \\
    (2,4,6) & 0 & 0 & 1 & 0 & 1 & 0 & 0 & 0 & 0 & 1 \\
(2,4,8) & 0 & 0 & 1 & 0 & 1 & 0 & 0 & 0 & 0 & 1 \\
(2,4,10) & 1 & 0 & 1 & 0 & 0 & 0 & 0 & 1 & 0 & 0 \\
(2,6,8) & 0 & 0 & 0 & 1 & 0 & 0 & 1 & 0 & 1 & 0 \\
(2,6,10) & 1 & 0 & 1 & 0 & 0 & 0 & 0 & 1 & 0 & 0 \\
(2,8,10) & 1 & 0 & 0 & 0 & 0 & 1 & 0 & 0 & 1 & 0 \\
(4,6,8) & 0 & 1 & 0 & 0 & 1 & 0 & 1 & 0 & 0 & 0 \\
(4,6,10) & 0 & 1 & 0 & 0 & 1 & 0 & 1 & 0 & 0 & 0 \\
(4,8,10) & 0 & 1 & 1 & 0 & 0 & 1 & 0 & 0 & 1 & 0 \\
(6,8,10) & 0 & 0 & 0 & 1 & 0 & 0 & 1 & 0 & 1 & 0
\end{array}
\]

\section{Discussion and open problems}\label{sec:discussion}

We have considered certain alternation-free hypergraphs (ABA-free, ABAB-free, etc.) and their duals. In particular we gave a new lower bound on $\chi_m$ of dual-ABAB-free hypergraphs which implies that $\chi_m$ is $3$ or $4$ for this family. It remains an open problem to determine which of these two values is the right one. Also, as we mentioned earlier, dual-ABABA-free hypergraphs and in general dual-$(AB)^{t/2}$-free hypergraphs with $t\ge 5$ were not yet investigated and it is not even known if $\chi_m$ is finite for them. Our equivalent definitions using their incidence matrices may be useful to construct hypergraphs showing that $\chi_m=\infty$, provided they exist.

\smallskip


Finally, from a computational perspective, how hard is it to decide if a hypergraph belongs to a certain family? Is there a polynomial time algorithm to decide if a hypergraph on an unordered vertex set is ABA-free, ABAB-free, etc.?
We will discuss these questions in an upcoming paper, joint with G\'abor Dam\'asdi and Karamjeet Singh, whom we thank for discussions, especially about Theorem \ref{thm:ext}.


\bibliographystyle{plainurl}
\bibliography{psdisk}

\begin{thebibliography}{10}

\bibitem{abab}
Eyal Ackerman, Bal{\'{a}}zs Keszegh, and D{\"{o}}m{\"{o}}t{\"{o}}r
  P{\'{a}}lv{\"{o}}lgyi.
\newblock Coloring hypergraphs defined by stabbed pseudo-disks and {ABAB}-free
  hypergraphs.
\newblock {\em SIAM Journal on Discrete Mathematics}, 34(4):2250--2269, 2020.

\bibitem{damdom}
G{\'a}bor Dam{\'a}sdi and D{\"o}m{\"o}t{\"o}r P{\'a}lv{\"o}lgyi.
\newblock Realizing an m-uniform four-chromatic hypergraph with disks.
\newblock {\em Combinatorica}, 42(1):1027--1048, Dec 2022.

\bibitem{furedi}
Zolt\'an F\"uredi, Tao Jiang, Alexandr Kostochka, Dhruv Mubayi, and Jacques
  Verstr\"ete.
\newblock Extremal problems for convex geometric hypergraphs and ordered
  hypergraphs.
\newblock {\em Canadian Journal of Mathematics}, 73(6):1648--1666, 2021.

\bibitem{gunby}
Benjamin Gunby and D{\"{o}}m{\"{o}}t{\"{o}}r P{\'{a}}lv{\"{o}}lgyi.
\newblock Asymptotics of pattern avoidance in the klazar set partition and
  permutation-tuple settings.
\newblock {\em European Journal of Combinatorics}, 82:102992, 2019.

\bibitem{wcf2}
Bal{\'{a}}zs Keszegh.
\newblock Coloring half-planes and bottomless rectangles.
\newblock {\em Computational geometry}, 45(9):495--507, 2012.

\bibitem{psdiskwrtpsdisk}
Bal{\'a}zs Keszegh.
\newblock {Coloring Intersection Hypergraphs of Pseudo-Disks}.
\newblock {\em Discrete {\&} Computational Geometry}, 64(3):942--964, Oct 2020.

\bibitem{kbdiscretehelly}
Bal\'azs Keszegh.
\newblock {Discrete Helly-type theorems for pseudohalfplanes}.
\newblock {\em European Journal of Combinatorics}, 101:103469, 2022.

\bibitem{pseudoconvex}
Bal\'azs Keszegh.
\newblock {A new discrete theory of pseudoconvexity}.
\newblock {\em {Discrete Mathematics \& Theoretical Computer Science}}, {vol.
  25:1}, May 2023.

\bibitem{cogezoo}
Bal{\'{a}}zs Keszegh and D{\"{o}}m{\"{o}}t{\"{o}}r P{\'{a}}lv{\"{o}}lgyi.
\newblock Geometric hypergraph zoo.
\newblock URL: \url{http://coge.elte.hu/cogezoo.html}.

\bibitem{abafree}
Bal{\'a}zs Keszegh and D{\"o}m{\"o}t{\"o}r P{\'a}lv{\"o}lgyi.
\newblock {An abstract approach to polychromatic coloring: shallow hitting sets
  in ABA-free hypergraphs and pseudohalfplanes}.
\newblock {\em J. Comput. Geom.}, 10:1--26, 2019.

\bibitem{korandipachtomon}
D\'{a}niel Kor\'{a}ndi, J\'{a}nos Pach, and Istv\'{a}n Tomon.
\newblock Large homogeneous submatrices.
\newblock {\em SIAM Journal on Discrete Mathematics}, 34(4):2532--2552, 2020.

\bibitem{indec}
J{\'a}nos Pach, G{\'a}bor Tardos, and G{\'e}za T{\'o}th.
\newblock Indecomposable coverings.
\newblock In {\em Discrete Geometry, Combinatorics and Graph Theory}, pages
  135--148. Springer, 2007.

\bibitem{romgunter}
Rom Pinchasi and G{\"u}nter Rote.
\newblock On the maximum size of an anti-chain of linearly separable sets and
  convex pseudo-discs.
\newblock {\em Isr. J. Math.}, 172:337--348, 2009.

\bibitem{sharir}
Micha Sharir.
\newblock On {{\(k\)}}-sets in arrangements of curves and surfaces.
\newblock {\em Discrete Comput. Geom.}, 6(6):593--613, 1991.

\bibitem{smorodinsky-yuditsky}
Shakhar Smorodinsky and Yelena Yuditsky.
\newblock Polychromatic coloring for half-planes.
\newblock {\em Journal of Combinatorial Theory, Series A}, 119(1):146--154,
  2012.

\end{thebibliography}
\end{document}